\newcommand{\N}{\ensuremath\mathbb{N}}
\newcommand{\T}{\ensuremath{\mathcal T}}
\newcommand{\lang}{\ensuremath\mathcal{L}}
\newtheorem{thm}{Theorem}
\renewenvironment{proof}{\paragraph{Proof.}}{\hfill$\square$\\}
\newcommand{\Ct}[1]{\tb{Cat}_{#1}}
\newcommand{\Ctp}[1]{\tb{Cat}_{(#1)}^{(+)}}
\newcommand{\Ctn}[1]{\tb{Cat}_{(#1)}^{(-)}}
\newcommand{\Hom}[1]{\H\left(#1\right)}
\def\rg{\textcolor{red}}
\def\a{\alpha}
\def\l{\lambda}
\def\De{\Delta}
\newcommand{\cmb}[2]{  \binom{ #1}{#2} }
\def\pgcd{\tb{gcd}}
\def\Bz{\mathcal{B}}
\newcommand{\Biz}[3]{\dfrac{1}{#1+#2}\cmb{#3#1+#3#2}{#3#1}}
\newcommand{\Bizu}[3]{\Bz_{#3_{1}}^{(#1,#2)}\Bz_{#3_{2}}^{(#1,#2)}\cdots\Bz_{#3_{l}}^{(#1,#2)}      }
\def\tb#1{\textbf{#1}}
\newcommand{\dint}[1]{\left\lfloor #1 \right\rfloor}
\newcommand{\dsum}[3]{ \displaystyle\sum\limits_{#1}^{#2} { #3}  }
\def\U{\mathbb{1}}
\def\D{\mathscr{D}}
\def\J{\ensuremath\mathcal{J}}
\def\T{\mathscr{T}}
\def\I{\mathscr{I}}
\def\H{\mathcal{H}}
\definecolor{lgray}{gray}{0.7}
\title{New formulas for Dyck paths in a rectangle}
\author{Jos\'{e} Eduardo Bla\v{z}ek$^1$
}
\institute{${}^1$Laboratoire de Combinatoire et d'Informatique Math\'{e}matique\\Universit\'{e} du Qu\'{e}bec \`{a} Montr\'{e}al\\
\mailsa \\
}
\authorrunning{Bla\v{z}ek}
\newcommand{\keywords}[1]{\par\addvspace\baselineskip
\noindent\keywordname\enspace\ignorespaces#1}
\begin{document}
\maketitle

\sloppy

\begin{abstract}
We consider the problem of counting the set of $\D_{a,b}$ of Dyck paths inscribed in a rectangle of size $a\times b$.  They are a natural generalization of the classical Dyck words enumerated by the Catalan numbers.

By using  Ferrers diagrams associated to Dyck paths, we derive formulas for  the enumeration of $\D_{a,b}$ with $a$ and $b$ non relatively prime, in terms of Catalan numbers.\end{abstract}
\keywords{Dyck Paths, Ferrers diagrams, Catalan numbers, Bizley numbers, Christoffel words.}
\section{Introduction}
The study of Dyck paths is a central topic in combinatorics as they provide one of the many interpretations of Catalan numbers. A partial overview can be found for instance in Stanley's comprehensive presentation of enumerative combinatorics \cite{Stanley} (see also \cite{BLL}).  As a language generated by an algebraic grammar is characterized in terms of a Dyck language, they are important in theoretical computer science as well \cite{Eilenberg}. On a two-letter alphabet they correspond to well parenthesized expressions and can be interpreted in terms of paths in a square. 
Among the many possible generalizations, it is natural to consider paths in a rectangle, see for instance  Labelle  and Yeh \cite{LY90}, and more recently Duchon \cite{Duchon} or Fukukawa \cite{Fu13}. In algebraic combinatorics Dyck paths are related to parking functions and the representation theory of the symmetric group \cite{GMV14}. The motivation for studying these objects stems  from this field in an attempt to better understand the links between these combinatorial objects. 

In this work, we obtain a new  formula for $|\D_{a,b}|$, when $a$ and $b$ are not relatively prime, in terms of the Catalan numbers using the notion of Christoffel path. More precisely, the main results of this article (\tb{diagrams decomposition method} in Section \ref{FDCM}, Theorems \ref{theo1} and \ref{theo2} in Section \ref{teo}) are formulas for the case where $a=2k$ :
 \[
 |\D_{a,b}| = \left \{ \begin{array}{ll}
 			\Ctn{a,n}-\dsum{j=1}{k-1}{\Ctn{a-j,n}\Ctn{j,n} }, \quad
										&\text{\rm if $b=a(n+1)-2$,}\\
			\Ctp{a,n}+\dsum{j=1}{k}{\Ctp{a-j,n}\Ctp{j,n} }, 
										&\text{\rm if $b=an+2$,}
			\end{array}
			\right.
\]
where $k,n \in \N$, $\Ctn{a,n}:=\Ct{(a,a(n+1)-1)},$ and $\Ctp{a,n}:=\Ct{(a,an+1)}$.

The paper is organized as follows. In Section \ref{sdn} we fix the notation for Dyck and Christoffel paths, and present their encoding by Ferrers diagrams. Then, in Section \ref{FDCM},  we develop the "Ferrers diagram comparison method" and "diagrams decomposition method". Section \ref{teo} contains several technical results in order to prove the main results, and in section \ref{exa} we present the examples.

\section{Definitions and notation}\label{sdn}

We borrow the notation from Lothaire \cite{Lothaire1}. An \emph{alphabet}  is a finite set $\Sigma$, whose elements are called \emph{letters}.  The set of finite words over
$\Sigma$ is denoted $\Sigma^*$  and $\Sigma^+ =
\Sigma^*\setminus \{\varepsilon\}$ is the set of nonempty words where   $\varepsilon \in \Sigma^*$ is  the empty word. 
The number of occurrences of a given letter $\alpha$ in the word $w$ is denoted $|w|_\alpha$  and
$|w|=\sum_{\alpha\in\Sigma}|w|_\alpha $ is the length of the word.
A \emph{language} is a subset $L\subseteq \Sigma^*$. The \emph{language} of a word $w$ is $\lang(w) = \{f \in \Sigma^* \mid w =
pfs,\ p,s\in \Sigma^*\}$,  and its elements are called the \emph{factors} of $w$.  

\paragraph{\bf Dyck words and paths.} It is well-known that the language of Dyck words on $\Sigma=\{\tb0,\tb1\}$ is the language generated by the algebraic grammar $D\rightarrow \tb0D\tb1D +\varepsilon$. They are enumerated by the Catalan numbers (see \cite{KOS09}),
\[\Ct{n}= \frac{1}{n+1}{2n \choose n},\]
and can be interpreted as lattice paths inscribed in a square of size  
$n\times n$ using down  and right unit steps (see Fig. \ref{E35D} (a)).

\vspace{-0.5cm}
 \begin{figure}[h] 
\centering
\subcaptionbox{}[0.4\linewidth]{
\begin{tikzpicture}[scale=0.4, every node/.style={scale=0.9}]
 \draw ( 0, 0 ) node{0}; 
\draw ( 0, -1 ) node{1};
\draw ( 0, -2 ) node{2};  
\draw ( 0,1 ) node{$_{(0,3)}$}; 
\draw ( 4,-3 ) node{$_{(3,0)}$};  
 \draw  ( 0.50,-2.5 ) rectangle ( 1.5, -1.5 );
\draw  ( 1.5, -2.5 ) rectangle ( 2.5, -1.5 ) ; 
\draw  [fill=gray!30 ]( 2.5, -2.5 ) rectangle (3.5, -1.5 ); 
\draw  ( 0.50, -1.5 ) rectangle ( 1.5, -0.50 ); 
\draw [fill=gray!30 ] ( 1.5, -1.5 ) rectangle (2.5, -0.50 ); 
\draw [fill= gray!30 ] (2.5, -1.5 ) rectangle ( 3.5,-0.50 );
\draw [fill=gray!30 ] ( 0.50, -0.50 ) rectangle (1.5, 0.50 ); 
\draw [fill= gray!30 ] (1.5, -0.50 ) rectangle ( 2.5,0.50 ); 
\draw [fill= gray!30 ] ( 2.5,-0.50 ) rectangle ( 3.5, 0.50 );
 \draw ( 1.5, 0.50 ) -- (1.5, -2.5 ); 
 \draw ( 2.5,0.50 ) -- ( 2.5, -2.5 ); 
 \draw (3.5, 0.50 ) -- ( 3.5,-2.5 ); 
 \draw [color= blue, dashed]+(0.5,0.5) -- ( 3.5, -2.5 ); \draw [line
width=1pt,color=red] ( 0.50, 0.50 ) -- (0.50, -0.50 ); \draw [line
width=1pt,color=red] ( 0.50, -0.50 ) -- (1.5, -0.50 ); \draw [line
width=1pt,color=red] ( 1.5, -0.50 ) -- (1.5, -1.5 ); \draw [line width=1pt,color=red]( 1.5, -1.5 ) -- ( 2.5,-1.5 ); 
\draw [line width=1pt,color=red] ( 2.5, -1.5 ) -- ( 2.5, -2.5 ); 
\draw[line width=1pt,color=red] ( 2.5, -2.5 ) -- (3.5, -2.5 ); 
\end{tikzpicture}
}
\subcaptionbox{}[0.4\linewidth]{
\begin{tikzpicture}[scale=0.4, every node/.style={scale=0.9}]
 \draw ( 0, 0 ) node{0}; 
\draw ( 0, -1 ) node{1};
\draw ( 0, -2 ) node{3}; 
\draw ( 0,1 ) node{$_{(0,3)}$}; 
\draw ( 6,-3 ) node{$_{(5,0)}$};  
\draw  ( 0.50,-2.5 ) rectangle ( 1.5, -1.5 );
\draw  ( 1.5, -2.5 ) rectangle ( 2.5, -1.5 ) ; 
\draw  ( 2.5, -2.5 ) rectangle (3.5, -1.5 ); 
\draw [fill= gray!30 ] (3.5, -2.5 ) rectangle ( 4.5,-1.5 ); 
\draw [fill= gray!30 ] ( 4.5,-2.5 ) rectangle ( 5.5, -1.5 );
\draw  ( 0.50, -1.5 ) rectangle ( 1.5, -0.50 ); 
\draw [fill=gray!30 ] ( 1.5, -1.5 ) rectangle (2.5, -0.50 ); 
\draw [fill= gray!30 ] (2.5, -1.5 ) rectangle ( 3.5,-0.50 );
 \draw [fill= gray!30 ] ( 3.5,-1.5 ) rectangle ( 4.5, -0.50 );
\draw [fill= gray!30 ] ( 4.5, -1.5 ) rectangle ( 5.5, -0.50 ); 
\draw [fill=gray!30 ] ( 0.50, -0.50 ) rectangle (1.5, 0.50 ); 
\draw [fill= gray!30 ] (1.5, -0.50 ) rectangle ( 2.5,0.50 ); 
\draw [fill= gray!30 ] ( 2.5,-0.50 ) rectangle ( 3.5, 0.50 );
\draw [fill= gray!30 ] ( 3.5, -0.50 )rectangle ( 4.5, 0.50 ); 
\draw [fill= gray!30] ( 4.5, -0.50 ) rectangle ( 5.5, 0.50 );
 \draw ( 0.50,-0.50 ) -- ( 5.5, -0.50 );
 \draw( 0.50, -1.5 ) -- ( 5.5,-1.5 ); 
 \draw ( 1.5, 0.50 ) -- (1.5, -2.5 ); 
 \draw ( 2.5,0.50 ) -- ( 2.5, -2.5 ); 
 \draw (3.5, 0.50 ) -- ( 3.5,-2.5 ); 
 \draw ( 4.5, 0.50 ) -- (4.5, -2.5 );
 \draw [color= blue, dashed]+(0.5,0.5) -- ( 5.5, -2.5 ); \draw [line
width=1pt,color=red] ( 0.50, 0.50 ) -- (0.50, -0.50 ); \draw [line
width=1pt,color=red] ( 0.50, -0.50 ) -- (
1.5, -0.50 ); \draw [line
width=1pt,color=red] ( 1.5, -0.50 ) -- (
1.5, -1.5 ); \draw [line width=1pt,color=red]
( 1.5, -1.5 ) -- ( 2.5,
-1.5 ); \draw [line width=1pt,color=red] ( 2.5
, -1.5 ) -- ( 3.5, -1.5 ); \draw
[line width=1pt,color=red] ( 3.5, -1.5 ) -- (
3.5, -2.5 ); \draw [line width=1pt,color=red]
( 3.5, -2.5 ) -- ( 4.5,
-2.5 ); \draw [line width=1pt,color=red] ( 4.5
, -2.5 ) -- ( 5.5, -2.5 );
\end{tikzpicture}
}
\caption{Dyck path and Ferrers diagram.}\label{E35D}
\end{figure}
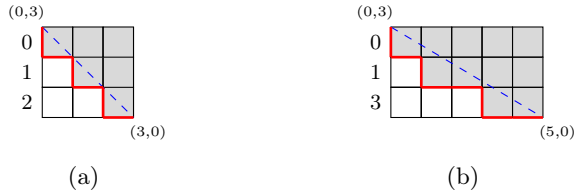
\vspace{-0.5cm}
More precisely an $(a,b)$-Dyck path is a south-east lattice path, going from $(0,a)$ to $(b,0)$, which stays below the $(a,b)$-diagonal, that is the line segment joining $(0,a)$ to $(b,0)$. In Figure \ref{E35D}, the paths are respectively $\tb0\tb1\tb0\tb1\tb0\tb1$  and $\tb0\tb1\tb0\tb1\tb1\tb0\tb1\tb1$. 

Alternatively such word may be encoded as a Ferrers diagram corresponding to the set of boxes to left (under) the path. As usual, Ferrers diagrams are identified by the number of boxes on each line, thus corresponding to partitions: 
 \begin{align}
 \l&=(\l_{a-1},\l_{a-2},\ldots,\l_{1}), &\text{  with  $ \l_{a-l}\leq\dint{\dfrac{bl}{a}}$ where  $1\leq l\leq a-1$}. \label{Ferrers}
\end{align}
\vspace{-0.4cm}

In the examples of Figure \ref{E35D}, the paths are respectively encoded by the sequences  $(2,1,0)$ and $(3,1,0)$.
The cases where $(a,b)$ are relatively prime, or $b=ak$ are of particular interest. For the case $b=ak$ with $k\geq 1$ we have the well-know formula of Fuss-Catalan (see \cite{KOS09}). 
\[
\Ct{(a,k)}=\dfrac{1}{ak+1}\cmb{ak+a}{a}.
\]
For $a\times b$ rectangles, with $a$ and $b$ are relatively prime, we also have the "classical" formula:
\[
\Ct{(a,b)}=\dfrac{1}{a+b}\cmb{a+b}{a}.
\]

In particular, when $a=p$ is  prime, either $b$ and $p$ are relatively prime, or $b$ is a multiple of $p$. Hence the relevant number of Dyck paths is:  
\begin{align*}
|\D_{p,b}|=\begin{cases}
\frac{1}{p+b}\cmb{p+b}{p} &\mbox{ if } \tb{gcd}(p,b)=1, \\
\frac{1}{p+b+1}\cmb{p+b+1}{p} &\mbox{ if } b=kp.
\end{cases}
\end{align*} 
The generalized ballot problem is related with the number of lattice paths form $(0,0)$ to $(a,b)$ that never go below the line $y=kx$ (see \cite{Serrano03}):
\begin{align*}
\dfrac{b-ka+1}{b}\cmb{a+b}{a}&&\text{where $k\geq 1$,  and $b>ak\geq0$.}
\end{align*}
And the number of lattice paths of length $2(k+1)n+1$ that start at $(0,0)$ and that avoid touching or crossing the line $y=kx$ (see \cite{Chap09}) has the formula:
\begin{align*} 
\cmb{2(k+1)n}{2n}-(k-1)\dsum{2n-1}{i=0}{\cmb{2(k+1)n}{i}},&&\text{where $n\geq 1$ and $k\geq0$.}
\end{align*} 
In the more general case we have a formula due to Bizley (see \cite{Biz54}) expressed as follows.
Let $m=da$, $n=db$ and $d=\tb{gcd}(m,n)$, then:
\begin{align*}
\Bz_{k}^{(a,b)}& :=\Biz{a}{b}{k} &\text{for $k\in \N$}, \\
\Bz_{\l}^{a,b}& :=\Bizu{a}{b}{\l} &\text{if $\l=(\l_1,\l_2,\dots,\l_l)$,}
\end{align*}
It is straightforward to show that the number of Dyck paths in $m\times n$ is:
\begin{align*}
|\D_{m,n}|&:= \dsum{\l \vdash d}{}{\frac{1}{z_{\l}}\Bz_{\l}^{(a,b)}}&\text{where $n\geq 1$ and $k\geq0$.}
\end{align*}
\vspace{-0.5cm}


\paragraph{\bf Christoffel paths and words.} A Christoffel path between two distinct points $P=(0,k)$  and $P'=(0,l)$ on a rectangular grid $a\times b$ is the closest lattice path that stays strictly below the segment $PP'$ (see \cite{MR13}).  For instance, the Dyck path of Figure \ref{E35D}(b) is also Christoffel, and the associated word is called a Christoffel word. The Christoffel path of a rectangular grid $a\times b$ is the Christoffel path associated to the line segment going from the north-west corner to the south-east corner of the rectangle of size $a\times b$.
 As in the case of Dyck paths, every Christoffel path in a fixed rectangular grid $a\times b$ is identified by a Ferrers diagram of shape $(\l_{a-1},\l_{a-2},\ldots,\l_{1})$ given by Equation \eqref{Ferrers}.
 

For later use, we define two functions associated to Ferrers diagram. 
Let $Q_{a, b}$ to be the total number of boxes in the Ferrers diagram associated to the Christoffel path of $ a\times b $ (see \cite{BJE14}):
\begin{align}
Q_{a,b} &=\dfrac{(a-1)(b-1)+\tb{gcd}(a,b)-1}{2}.\label{Qab}
\end{align} 

Also, let $ \De_ {a, b} (l) $ be the difference between the boxes of  the Ferrers diagrams associated to the Christoffel paths of $ a\times b $ and $ a \times (b-1) $, respectively:

\begin{align*}
\De_{a,b}(l):=\dint{\dfrac{bl}{a}}-\dint{\dfrac{(b-1)l}{a}},
\end{align*}
where $a<b\in\N$ and $1\leq l \leq a-1$.

In the next section we give an alternate method to calculate the number of ($a,b$)-Dyck paths when $a$ and $b$ are not relatively prime, and satisfying certain conditions in terms of the Catalan numbers.
\paragraph{\bf{Isosceles diagrams.}} An isosceles diagram $\I_{n}$ is a Ferrers diagram associated to a Christoffel path in a square having side length $n$. Given a Ferrers diagram $\T_{a,b}$, we call \textit{maximum isosceles diagram} the largest isosceles diagram included in $\T_{a,b}$.

\paragraph{\bf{Ferrers set.}} Let $\T_{a,b}$ be a Ferrers diagram. The Ferrers set of $\T_{a,b}$ is the set of all Dyck paths contained in $\T_{a,b}$.
\section{Ferrers diagrams comparison method}\label{FDCM}

Let $\T_{a,b}$ be the Ferrers diagram associated to a Christoffel path of $a\times b$. In order to establish  the main results we need to count the boxes in excess between the  Christoffel paths in rectangles  $a\times b$ and $a\times c$, for any $c>b$. We develop a method to do this by removing exceeding boxes between  $\T_{a,b}$ and $\T_{a,c}$, for $c>b$.
Using the functions $Q_{a,b}$ and $\De_{a,b}(l)$, our comparison method gives the following rules:
\begin{enumerate}[\tb{Rule} 1:]
\item\label{R1} If $Q_{a,b}=1$ and $\De_{a,b}(i)=1$, there is only one corner in $\T_{a,c}$ which does not belong to the $\T_{a,b}$. Let $\T_{a_1,b_1}$ and $\T_{a'_1,b'_1}$ be the Ferrers diagram obtained by erasing from $\T_{a,c}$ the row and the column that contain $\rg{ \a}$ (see Figure \ref{DifDiag}(b)).

\begin{figure}[h]
\centering
\subcaptionbox{Comparison $\T_{a,b}$ and $\T_{a,c}$ }[0.5\linewidth]{
$
\ytableausetup{mathmode, boxsize=0.8em }
\begin{ytableau}
\none [^{_{\l_{n}}}\,\,\,\, ]&  \, &\,&\\
\none [^{_{\l_{n-1}}}\,\,\,\, ]&  \,& \, && \, &\\
\none [^{_{\l_{n-2}}} \,\,\,\,]&  \,& \, \,& \, & \,  & & \, & \\
\none [^{_{\vdots}} \,\,\,\,]& \none[^{_{\vdots}}]& \none[^{_{\vdots}}] \,& \none[^{_{\vdots}}] &
\none[^{_{\vdots}}]  &\none[^{_{\vdots}}] & \none[^{_{\vdots}}]& \none[^{_{\vdots}}] \\
\none [^{_{\l_{i}}}\,\,\,\, ]&  \,& \, \,& \, & \,  & \,& \, & & \, & &*(red)&\none[ \rg{ \a}] \\
\none [^{_{\vdots}} \,\,\,\,]& \none[^{_{\vdots}}]& \none[^{_{\vdots}}] \,& \none[^{_{\vdots}}] &
\none[^{_{\vdots}}]  &\none[^{_{\vdots}}] & \none[^{_{\vdots}}]& \none[^{_{\vdots}}] &\none[^{_{\vdots}}] & \none[^{_{\vdots}}]& \none[^{_{\vdots}}] \\
\none [^{_{\l_{2}}} \,\,\,\,]&   \,& \, \,& \, & \,  & \,& \, & \,&\, &\,& & \,&  \\
\none [^{_{\l_{1}}} \,\,\,\,]&   \,& \, \,& \, & \,  & \,& \, & \,&\, &\,& &\, &\, & & \\
\end{ytableau}
$} 
\subcaptionbox{$\T_{a_1,b_1}$ and $\T_{a'_1,b'_1}$}[0.3\linewidth]{
$
\ytableausetup{mathmode, boxsize=0.8em }
\begin{ytableau}
\none [^{_{\l_{n}}}\,\,\,\, ]&  \, &\,&\\
\none [^{_{\l_{n-1}}}\,\,\,\, ]&  \,& \, && \, &\\
\none [^{_{\l_{n-2}}} \,\,\,\,]&  \,& \, \,& \, & \,  & & \, & \\
\none [^{_{\vdots}} \,\,\,\,]& & &  &   &  & & &  & \\
\none [^{_{\l_{i}}}\,\,\,\, ]& \none[]& \none[] \,& \none[] & \none[]  & \none[]& \none[] & \none[]& \none[] &\none[] &*(red)  &\none[ \rg{ \a}] \\
\none [^{_{\vdots}} \,\,\,\,]& \none[]& \none[] \,& \none[] &\none[]  &\none[] & \none[]& \none[] &\none[] & \none[]& \none[^{_{\vdots}}] \\
\none [^{_{\l_{2}}} \,\,\,\,]&   \none[]& \none[] \,& \none[] & \none[]  & \none[]& \none[] & \none[]&\none[] &\none[]&\none[] & &  \\
\none [^{_{\l_{1}}} \,\,\,\,]&  \none[]& \none[] \,& \none[] & \none[]  & \none[]& \none[] & \none[]&\none[] &\none[]&\none[] && & & \\
\end{ytableau}
$
}
\vspace{-0.4cm}
\caption{Rule 1.}\label{DifDiag}  
\end{figure}

These Ferrers diagrams are not associated to a Christoffel path in general. Let \[\J_{a_1,b_1}\subseteq \D_{a_1,b_1}, \text{  and  } \J_{a'_1,b'_1}\subseteq \D_{a'_1,b'_1}\]  be the sets of Dyck paths contained in the Ferrers diagrams $\T_{a_1,b_1}$ and $\T_{a'_1,b'_1}$, respectively. We have:
\[|\D_{a,c}|-|\D_{a,b}|=-|\J_{a_1,b_1}|\cdot|\J_{a_1',b_1'}|,\]

It is clear that if the box $\a$ is located on the bottom line ($l=a-1$), the  equation is reduced to:
\[|\D_{a,c}|-|\D_{a,b}|=-|\J_{a_1,b_1}|.\]
\item\label{R2} 
When $Q_{a,b}=k$ and there are exactly $k$ rows with a difference of one box we need to calculate how many paths contain these boxes (see Figure \ref{DifDiag2}), so we construct a sequence of disjoint sets as follows. Let $A_j $ be the set of all paths that do not contain the boxes $\rg{\a_i}$ for each $ i> j $, where $1\leq j\leq k$.  Also, let $B_j$ be the set of all paths that do not contain the boxes $ \a_i$ for each $i <j $, where $ 1 \leq j \leq k $.

\begin{figure}[h]
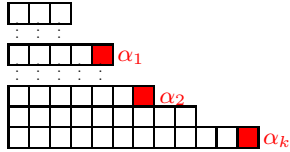

$$
\ytableausetup{mathmode, boxsize=0.8em}
\begin{ytableau}
\none [  ]&  \, &&\\
\none [ ]&\none [^{_{\vdots}}] &\none [^{_{\vdots}}]&\none [^{_{\vdots}}]\\
\none []&  \,& && \ &*(red) & \none[\ \ \rg{\a_1}]\\
\none [ ]&\none [^{_{\vdots}}] &\none [^{_{\vdots}}]&\none [^{_{\vdots}}] &\none [^{_{\vdots}}]&\none [^{_{\vdots}}]\\
\none []&  \,&  \,&  &   & &  & *(red) & \none[\ \ \rg{\a_2}] \\
\none [ ]&  \,& \,&  &   & &  & & &  \\
\none []&   \,&   &   & && & && & & &*(red)& \none[\ \ \rg{\a_k}] \\
\end{ytableau}
$$ 
\vspace{-0.4cm}
\caption{More one box.}\label{DifDiag2}
\end{figure}

\vspace{-0.5cm}
This strategy gives us disjoint sets that preserve the total union, so using \tb{Rule \ref{R1}} for every $A_j$ or $B_j$ we get:
\[|\D_{a,c}|-|\D_{a,b}|=-\dsum{j=1}{k}{(|\J_{a_j,b_j}|\cdot|\J_{a_j',b_j'}|)},\]

where $\J_{a_j,b_j}\subseteq \D_{a_j,b_j}$, and $\J_{a'_j,b'_j}\subseteq \D_{a'_j,b'_j}$.
\end{enumerate}


\subsection{Diagrams decomposition method}

Using the diagrams comparison method we make an iterative process erasing boxes in  excess between the diagram $\T_{a,b}$ and its respective maximum isosceles diagram $\I_{n}$. It begins at the right upper box as shown in Figure \ref{pas1}. The decomposition is give in sums and products of diagrams.
The sum operation $+$ is given by the union of disjoint Ferrers sets. We can consider a red box in the border of the diagram. Any path contained in a diagrams having the red box is written as one of the two cases in Figure \ref{suma}.
\vspace{-0.4cm}
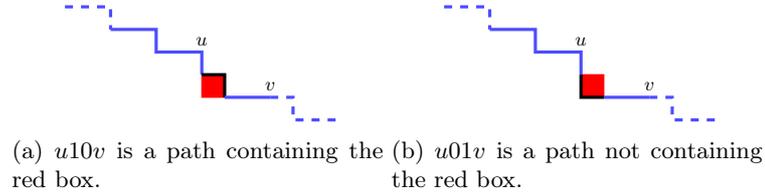
\begin{figure}[h]
\centering
\subcaptionbox{$u10v$ is a path containing the red box.}[0.4\linewidth]{
\begin{tikzpicture}[scale=.3, every node/.style={scale=0.8}] 
\draw[fill=red,draw=red] ( 2,-1 ) -- ( 2,-2 ) -- (3,-2) -- (3,-1)-- cycle; 
\draw[color=blue!70,dashed,line width=1.2pt](-4,2)--(-2,2)--(-2,1);
\draw[color=blue!70,dashed,line width=1.2pt](5,-2)--(6,-2)--(6,-3)--(8,-3);
\draw[color=blue!70,line width=1.2pt](-2,1)--(0,1)--(0,0) -- ( 2, 0 ) -- ( 2,-1 );
\draw[color=black,line width=1.2pt] ( 2,-1 )--(3,-1)--(3,-2);
\draw[color=blue!70,line width=1.2pt](3,-2)--(5,-2); 
\draw [ black ]  ( 2,0 )  node[above] {$u$ };
\draw [ black ]  ( 5,-2 )  node[above] {$v$ };
\end{tikzpicture}
}
\subcaptionbox{$u01v$ is a path not containing the red box.}[0.4\linewidth]{
\begin{tikzpicture}[scale=.3, every node/.style={scale=0.8}] 
\draw[fill=red,draw=red] ( 2,-1 ) -- ( 2,-2 ) -- (3,-2) -- (3,-1)-- cycle; 
\draw[color=blue!70,dashed,line width=1.2pt](-4,2)--(-2,2)--(-2,1);
\draw[color=blue!70,dashed,line width=1.2pt](5,-2)--(6,-2)--(6,-3)--(8,-3);
\draw[color=blue!70,line width=1.2pt](-2,1)--(0,1)--( 0, 0 ) -- ( 2, 0 ) -- ( 2,-1 );
\draw[color=black,line width=1.2pt]( 2,-1 )--(2,-2)--(3,-2); 
\draw[color=blue!70,line width=1.2pt](3,-2)--(5,-2); 
\draw [ black ]  ( 2,0 )  node[above] {$u$ };
\draw [ black ]  ( 5,-2 )  node[above] {$v$ };
\end{tikzpicture}
}
\caption{Separation of diagrams.}\label{suma}
\end{figure}
\vspace{-0.6cm}
The products of diagrams $\T\times\T'$  is a diagram containing all possible concatenation of a Dyck path of $\T$ with a Dyck path of $\T'$. For example, the diagram corresponding to $\D_{4,6}$ is $[4,3,1]$ $\ytableausetup {mathmode, boxsize= 0.5 em,centertableaux} \ydiagram [ ]{ 1 , 3 , 4 }$ includes the isosceles diagram $[3,2,1]$ $\ytableausetup {mathmode, boxsize= 0.5 em,centertableaux} \ydiagram [*(yellow!70) ]{ 1 , 2 , 3 }$. When we remove the box the diagram splits into two pairs associated with operations that simplify the computation of paths (see Figure \ref{pas1}).

\vspace{-1,2cm}
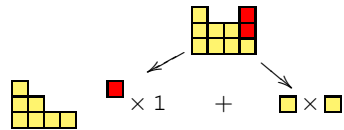
\begin{figure}[h] 
\begin{align*}
\xymatrixrowsep{0.05in}
\xymatrixcolsep{0.05in}
\xymatrix{
&\ytableausetup {mathmode, boxsize=0.6em,centertableaux} 
\ydiagram[*(red)]
  {1+0,2+1,3+1}
*[*(yellow!70)]{1,3,4}
\ar@{->}[ld]^{} \ar@{->}[rd]^{}  \\
\ytableausetup {mathmode, boxsize= 0.6em,centertableaux} 
\ydiagram[*(red)]
  {1+0,2+0,3+1}
*[*(yellow!70)]{1,2,4}
\times \U& +  &\ytableausetup {mathmode, boxsize= 0.6em,centertableaux} \ydiagram [ *(yellow!70)]{ 1 }  \times \ydiagram [ *(yellow!70)]{ 1 }& 
}
\end{align*}
\vspace{-0.4cm}
\caption{First diagram decomposition.}\label{pas1}
\end{figure}
\vspace{-0.7cm}
In Figure \ref{pas1}, $\U$ is an empty diagram. We repeat this method until all the diagrams are isosceles (the operation $\times$ distributes the operation $+$).
\vspace{-0.8cm}
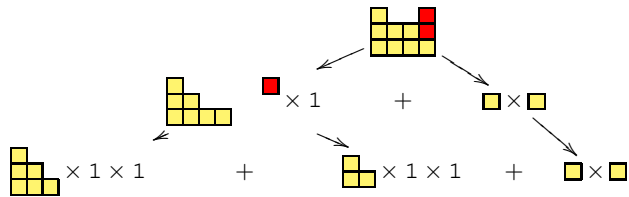
\begin{figure}[h] 
\begin{align*}
\xymatrixrowsep{0.03in}
\xymatrixcolsep{0.03in}
\xymatrix{
&&\ytableausetup {mathmode, boxsize= 0.6em,centertableaux}
\ydiagram[*(red)]
  {1+0,2+1,3+1}
*[*(yellow!70)]{1,3,4}
\ar@{->}[ld]^{} \ar@{->}[rd]^{}  \\
&\ytableausetup {mathmode, boxsize= 0.6em,centertableaux}
\ydiagram[*(red)]
  {1+0,2+0,3+1}
*[*(yellow!70)]{1,2,4}\times \U\ar@{->}[ld]^{}\ar@{->}[rd]^{}& +  &\ytableausetup {mathmode, boxsize= 0.6em ,centertableaux} \ydiagram [ *(yellow!70)]{ 1 }  \times \ydiagram [ *(yellow!70)]{ 1 } \ar@{->}[rd]^{}& \\
\ytableausetup {mathmode, boxsize= 0.6em,centertableaux} \ydiagram [ *(yellow!70)]{ 1 , 2 , 3 } \times \U\times \U&+& \ydiagram [ *(yellow!70)]{ 1 , 2  }\times \U\times \U &+&\ydiagram [ *(yellow!70)]{ 1 }  \times \ydiagram [ *(yellow!70)]{ 1 }&&\\
}
\end{align*}
\vspace{-0.4cm}
\caption{Full diagram decomposition.}\label{dec2}
\end{figure}

\vspace{-0.7cm}
Clearly, we can count the Dyck paths in an isosceles diagram with a classical Catalan formula because there is a relation between the decomposition and the number of Dyck paths. This relation, denoted $\H$, between the isosceles diagrams and Catalan numbers is such as:
$$
\begin{array}{c c}
\H(\U):=1,& \H(\I_n+\I_m):=\H(\I_n)+\H(\I_m),\\
\H(\I_n):=\Ct{n},& \H(\I_n\times\I_m):=\H(\I_n)\cdot\H(\I_m).
\end{array}
$$

\begin{align*}
\Hom{\ytableausetup {mathmode, boxsize= 0.5 em,centertableaux} \ydiagram [ *(yellow!70)]{ 1 , 3 , 4 } }
& = \Hom{\ytableausetup {mathmode, boxsize= 0.5 em,centertableaux} \ydiagram [ *(yellow!70)]{ 1 , 2 , 3 } \times 1\times 1+\ytableausetup {mathmode, boxsize= 0.5 em,centertableaux} \ydiagram [ *(yellow!70)]{ 1 , 2  }\times 1\times 1 +\ytableausetup {mathmode, boxsize=0.5 em,centertableaux} \ydiagram [ *(yellow!70)]{ 1 }  \times \ydiagram [ *(yellow!70)]{ 1 }} &\\
& = \Hom{\ytableausetup {mathmode, boxsize= 0.5 em,centertableaux} \ydiagram [ *(yellow!70)]{ 1 , 2 , 3 } \times 1\times 1}+\Hom{\ytableausetup {mathmode, boxsize= 0.5 em,centertableaux} \ydiagram [ *(yellow!70)]{ 1 , 2  }\times 1\times 1} +\Hom{\ytableausetup {mathmode, boxsize=0.5 em,centertableaux} \ydiagram [ *(yellow!70)]{ 1 }  \times \ydiagram [ *(yellow!70)]{ 1 }} &\\
& = \Hom{\ytableausetup {mathmode, boxsize= 0.5 em,centertableaux} \ydiagram [ *(yellow!70)]{ 1 , 2 , 3 } }+\Hom{\ytableausetup {mathmode, boxsize= 0.5 em,centertableaux} \ydiagram [ *(yellow!70)]{ 1 , 2  }} +\Hom{\ytableausetup {mathmode, boxsize=0.5 em,centertableaux} \ydiagram [ *(yellow!70)]{ 1 } } \times \Hom{\ydiagram [ *(yellow!70)]{ 1 }} &\\
&= \Ct{4}+\Ct{3}+\Ct{2}\times\Ct{2} =23&\\
\end{align*}

\subsection{Technical results}
The following technical formulas are needed in the sequel (see \cite{BJE14}).
\begin{enumerate}[$i)$]
\item Let $a=2k$, $b=2k(n+1)-1$, and $1\leq l \leq 2k-1$. Then,
\begin{align}
\De_{2k,2k(n+1)-1}(l)=\begin{cases}
0 &\mbox{if } 1\leq l\leq k,\\
1&\mbox{if } k+1\leq l\leq 2k-1.
\end{cases}\label{eDeltaS}
\end{align} 
\item Let $a=2k$, $b=2kn+2$, and $1\leq l \leq 2k-1$. Then,
\begin{align}
\De_{2k,2kn+2}(l)=\begin{cases}
0 &\mbox{if } 1\leq l\leq k-1,\\
1&\mbox{if } k\leq l\leq 2k-1.
\end{cases}\label{eDeltaI}
\end{align}

\item Let $a,k\in \N$, and $k<a$ . There exists a unique $r\in\N$ such as for $k=1,\ldots, a-1$ :
\begin{align}
\dint{\dfrac{kr}{a}}=k-1 & \text{  or  } \dint{\dfrac{kr}{a}}=k, \label{eSPE}
\end{align}
and $\pgcd(r,a)=1$. The solution is given by $r=a-1$
\end{enumerate}

\section{Theorems}\label{teo}
Now we are ready to prove  the two main results of this article. The formulas are obtained by studying  the "Ferrers diagram comparison method" (see section 3), in the cases where every Ferrers diagram obtained by subdivisions of $\T_{a,c }$ is associated  to a Christoffel path inscribed in a rectangular box of co-prime dimension.

\begin{thm}[see \cite{BJE14}]\label{theo1}
Let $a=2k$, $b=a(n+1)-2$, and $k,n \in \N$, then the number of Dyck paths is:
\[
 |\D_{a,b}| = \Ct{(a,n)}^{(-)}-\dsum{j=1}{k-1}{\Ct{(a-j,n)}^{(-)}\Ct{j,n}^{(-)} },
\]
where $\Ct{a,n}^{(-)}:=\Ct{(a,a(n+1)-1)}.$
\end{thm}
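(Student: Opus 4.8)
The plan is to compare the Christoffel diagram $\T_{a,b}$ with the one obtained for the adjacent width $c:=b+1=a(n+1)-1$. Since $\gcd(a,c)=\gcd(2k,2k(n+1)-1)=1$, the rectangle $a\times c$ is of co-prime dimension, so its Christoffel diagram carries all of $\D_{a,c}$ and, by the classical formula, $|\D_{a,c}|=\Ct{(a,c)}=\Ctn{a,n}$. The whole proof then reduces to evaluating the defect $|\D_{a,c}|-|\D_{a,b}|$ through the Ferrers diagram comparison method of Section \ref{FDCM}.

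First I would locate the excess boxes. Writing the row lengths of a Christoffel diagram via \eqref{Ferrers} and the identity $\lfloor l(m(n+1)-1)/m\rfloor=l(n+1)-1$ valid for $1\le l\le m-1$, the technical formula \eqref{eDeltaS} gives $\De_{a,c}(l)=0$ for $1\le l\le k$ and $\De_{a,c}(l)=1$ for $k+1\le l\le 2k-1$. Hence $\T_{a,c}$ has exactly $k-1$ boxes in excess over $\T_{a,b}$, one sitting at the right end of each row $l=k+1,\dots,2k-1$; label them $\a_1,\dots,\a_{k-1}$, with $\a_j$ in row $l=a-j$. This is precisely the situation of \tb{Rule \ref{R2}} with $k-1$ single-box rows, so the disjoint decomposition there yields
\[
|\D_{a,c}|-|\D_{a,b}|=\sum_{j=1}^{k-1}|\J_{a_j,b_j}|\cdot|\J_{a'_j,b'_j}|,
\]
where $\T_{a_j,b_j}$ and $\T_{a'_j,b'_j}$ are the two diagrams obtained by erasing the row and the column of $\a_j$ as in \tb{Rule \ref{R1}}.

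The heart of the argument, and the step I expect to be the main obstacle, is to identify these two pieces. I would show that erasing the row and column of $\a_j$ cuts $\T_{a,c}$ into the staircase of the rows above $\a_j$ and the staircase of the rows below it, and that, as free-standing Ferrers diagrams, these are exactly the Christoffel diagrams of the rectangles $(a-j)\times((a-j)(n+1)-1)$ and $j\times(j(n+1)-1)$. The verification is a floor-function computation: using again $\lfloor l(m(n+1)-1)/m\rfloor=l(n+1)-1$ one checks that the surviving row lengths match \eqref{Ferrers} for these two rectangles, while the uniqueness statement \eqref{eSPE} (with $r=a-1$) guarantees that the resulting diagrams are genuinely the closest-below, i.e. Christoffel, diagrams rather than mere Ferrers diagrams contained in them. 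The delicate point is to keep careful track of the deleted row and column and of the one shared corner, so that the widths really come out as $(a-j)(n+1)-1$ and $j(n+1)-1$ and not one unit larger, and so that the two sub-paths are genuinely independent.

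Once this identification is in place the proof closes quickly. Each rectangle $(a-j)\times((a-j)(n+1)-1)$ and $j\times(j(n+1)-1)$ is of co-prime dimension, since $\gcd(m,m(n+1)-1)=1$; therefore its Christoffel diagram again carries every Dyck path of the rectangle, and $|\J_{a_j,b_j}|=\Ctn{a-j,n}$, $|\J_{a'_j,b'_j}|=\Ctn{j,n}$ by the classical formula, equivalently by applying the relation $\H$ to the fully isosceles decomposition. Substituting into the defect identity yields
\[
|\D_{a,b}|=|\D_{a,c}|-\sum_{j=1}^{k-1}\Ctn{a-j,n}\,\Ctn{j,n}=\Ctn{a,n}-\sum_{j=1}^{k-1}\Ctn{a-j,n}\,\Ctn{j,n},
\]
which is the claimed formula.
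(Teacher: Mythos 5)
Your proposal is correct and follows essentially the same route as the paper: compare $\T_{a,b}$ with $\T_{a,b+1}$, use Equations \eqref{Qab} and \eqref{eDeltaS} to locate the $k-1$ excess boxes, apply Rules \ref{R1} and \ref{R2} to decompose the defect into products over the sub-rectangles $(a-j)\times((a-j)(n+1)-1)$ and $j\times(j(n+1)-1)$, and invoke co-primality to evaluate each factor by the classical formula. If anything, you supply more justification than the paper does for the identification of the split pieces as Christoffel diagrams, which the paper simply asserts.
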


\begin{proof}
From Equations \ref{Qab} and \ref{eDeltaS}, we get that there are $k-1$ total difference between the Ferrers diagram associated to the Christoffel path of $a\times b$ and $a\times (b+1)$. We easily obtain that the Ferrers diagram associated to $\D_{c,cn+c-1}$ is $\l=((c-1)n+c-2,\dots,2n+1,n)$.

By  definition of $A_j$, the rectangles $j \times b_1$ and $(2k-j)\times b'_1$ are such that their maximal underlying diagrams are:
\begin{align*}
 \l&=((j-1)n+j-2,\dots,3n+2,2n+1,n),&\\
 \l'&=((2k-j-1)n+2k-j-2,\dots,2n+1,n),
 \end{align*}
respectively. So,
\[|A_j|=|\D_{2k-j,(2k-j)n+2k-j-1}||\D_{j,jn+j-1}|.\]
Since all rectangles are relatively prime, we have:
\[|\D_{a,b}|-|\D_{a,b+1}|=-\dsum{j=1}{k-1}{(|\D_{a-j,(a-j)(n+1)-1}|\cdot|\D_{j,j(n+1)-1}|)}.\]
then
\[
 |\D_{a,b}| = \Ct{(a,n)}^{(-)}-\dsum{j=1}{k-1}{\Ct{(a-j,n)}^{(-)}\Ct{j,n}^{(-)} }.
\]
where $\Ct{t,n}^{(-)}:=\Ct{(t,t(n+1)-1)}$, of course $\Ct{(1,n)}^{(-)}=1$.
\end{proof}

\begin{thm}[see \cite{BJE14}]\label{theo2}
Let $a=2k$, $b=an+2$ and $k,n \in \N$, then the number of Dyck paths is:
\[
 |\D_{a,b}| = \Ct{(a,n)}^{(+)}+\dsum{j=1}{k}{\Ct{(a-j,n)}^{(+)}\Ct{j,n}^{(+)} }.
\]
where $\Ct{a,n}^{(+)}:=\Ct{a,an+1}$.
\end{thm}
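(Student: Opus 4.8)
The plan is to follow the proof of Theorem \ref{theo1} almost verbatim, feeding in the second technical identity \eqref{eDeltaI} in place of \eqref{eDeltaS}. The decisive structural difference is one of position: here the target rectangle $a\times b$ with $b=an+2=2kn+2$ lies just \emph{above} the co-prime rectangle $a\times(b-1)$ with $b-1=an+1=2kn+1$, whereas in Theorem \ref{theo1} the target lay just below a co-prime rectangle. Since $\pgcd(2k,2kn+1)=\pgcd(2k,1)=1$, the diagram $\T_{a,an+1}$ is a genuine Christoffel diagram, so $|\D_{a,an+1}|=\Ct{(a,an+1)}=\Ct{(a,n)}^{(+)}$. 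Because $\T_{a,an+2}$ is obtained from $\T_{a,an+1}$ by \emph{adding} corner boxes, the comparison method now counts the paths \emph{gained}, and the correction enters additively; this is exactly what turns the $-$ of Theorem \ref{theo1} into the $+$ of the present statement.

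First I would count the excess corners. By \eqref{eDeltaI} the difference $\De_{2k,2kn+2}(l)$ equals $1$ precisely for $l=k,k+1,\ldots,2k-1$, that is for exactly $k$ values of $l$, so there are $k$ corner boxes $\a_1,\ldots,\a_k$ lying in $\T_{a,an+2}$ but not in $\T_{a,an+1}$, one per row. This is one more corner than in Theorem \ref{theo1}, where \eqref{eDeltaS} produced only $k-1$ rows (beginning at $l=k+1$ instead of $l=k$); it is this extra corner that lengthens the sum to run all the way to $j=k$.

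Next I would invoke \tb{Rule \ref{R2}}: the disjoint family $A_j$ decomposes the gained paths, and for each $j$ erasing the row and column through $\a_j$ splits the diagram into two blocks. As in Theorem \ref{theo1}, I would read off the maximal underlying diagram of each block from \eqref{Ferrers}; for a co-prime rectangle $c\times(cn+1)$ one has $\dint{(cn+1)l/c}=nl$ for $1\le l\le c-1$, so the underlying diagram is $((c-1)n,(c-2)n,\ldots,2n,n)$. Matching the two blocks to $j\times(jn+1)$ and $(2k-j)\times((2k-j)n+1)$, and noting $\pgcd(j,jn+1)=\pgcd(2k-j,(2k-j)n+1)=1$ so that both are co-prime Christoffel cases, yields $|A_j|=|\D_{2k-j,(2k-j)n+1}|\cdot|\D_{j,jn+1}|=\Ct{(a-j,n)}^{(+)}\Ct{j,n}^{(+)}$. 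Summing over $j=1,\ldots,k$ and adding the base count $\Ct{(a,n)}^{(+)}$ gives the claimed formula.

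The step I expect to be the main obstacle is justifying that the two blocks obtained from each $\a_j$ are genuinely (the maximal Dyck diagrams inside) the co-prime Christoffel rectangles $j\times(jn+1)$ and $(2k-j)\times((2k-j)n+1)$, with no residual excess boxes left over. This is where the identity \eqref{eSPE}, which forces $r=a-1$, is needed: it certifies that the truncated profiles are exactly those of co-prime Christoffel paths, so that each Ferrers set $\J$ coincides with the full Dyck set $\D$ and the Catalan count applies. One must also treat the new top corner $j=k$ on its own: there $a-j=k=j$, so it contributes the square term $\Ct{(k,n)}^{(+)}\Ct{(k,n)}^{(+)}$, which has no counterpart in Theorem \ref{theo1}.
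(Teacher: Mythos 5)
Your proposal is correct and follows essentially the same route as the paper: compare $\T_{a,an+2}$ with the co-prime diagram $\T_{a,an+1}$, locate the $k$ excess corner boxes via \eqref{eDeltaI}, decompose the gained paths into disjoint sets (the paper labels them $B_j$ rather than $A_j$, an immaterial choice of which end of the corner sequence anchors the disjointness), and identify each block with a co-prime rectangle $j\times(jn+1)$, $(2k-j)\times((2k-j)n+1)$ whose underlying diagram is $((c-1)n,\dots,2n,n)$. Your added remarks on the sign flip, the extra $j=k$ square term, and the role of \eqref{eSPE} in certifying that the blocks carry full co-prime Christoffel profiles are consistent with (and slightly more explicit than) the paper's argument.
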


\begin{proof}From Equations \ref{Qab} and \ref{eDeltaI}, we get that there are $k$ total difference between the Ferrers diagram associated to the Christoffel path of $a\times b$ and $a\times (b+1)$. We easily get that the Ferrers diagram associated to $\D_{c,cn+1}$ is $\l=((c-1)n,\dots,2n,n)$. By definition of $B_j$, the rectangles $j \times b_1$ et $(2k-j)\times b'_1$ are such as  that their maximal underlying diagram are:
\begin{align*}
 \l&=((j-1)n,\dots,3n,2n,n),&\\
 \l'&=((2k-j-1)n,\dots,2n,n),
 \end{align*}
 respectively. So,
\[|B_j|=|\D_{2k-j,(2k-j)n+1}||\D_{j,jn+1}|.\]
Since all rectangles are relatively prime, we have:
\[|\D_{a,b}|-|\D_{a,b-1}|=-\dsum{j=1}{k-1}{(|\D_{a-j,(a-j)n-1}|\cdot|\D_{j,jn+1}|)}.\]
then
\[
 |\D_{a,b}| = \Ct{(a,n)}^{(+)}+\dsum{j=1}{k}{\Ct{(a-j,n)}^{(+)}\Ct{j,n}^{(+)} }.
\]
where $\Ct{t,n}^{(+)}:=\Ct{(t,tn+1)}$, of course $\Ct{(1,n)}^{(+)}=1$.
\end{proof}

\section{Examples}\label{exa}
In order to illustrate the main results, we consider the cases $ \D_{8,8n + 6}$, to generalize the case of discrepancies with a larger diagram. Then we  study $\D_{6,6n + 2}$ to generalize the case of discrepancies for shorter diagram. These cases corresponding to rectangles having relatively prime dimensions such that $ \J_{a_j, b_j} = \D_{a_j, b_j} $ and $\J_{a'_j, b'_j} = \D_{a'_j, b'_j} $. Finally, we give some examples of the diagrams decomposition method.
\vspace{-0.4cm}
\subsection{Example $\D_{8,8n+6}$} \label{8n+6}
We apply the comparison method to $\D_{8,8n + 6}$ and $\D_{8,8n + 7} $. Using Equation \ref{Qab}, we get that the difference in total number of sub-diagonal boxes is $Q_{8,8n + 7} -Q_{8,8n + 6} =  3$. To find the lines where they are located we use the Equation \ref{eDeltaS}. In this cases $\De(l)$ is zero except for $l=5,6,7$ (see Figure \ref{D867}).
\vspace{-0.6cm}
\begin{figure}[h]
$$
\ytableausetup{mathmode, boxsize=1.em}
\begin{ytableau}
\none [^{_{n}}\,\,\,\, ]&  \, &\none[_{\cdots}]&\\
\none [^{_{2n+1}}\,\,\,\, ]&  \,& \none[_{\cdots}] && \none[_{\cdots}] &\\
\none [^{_{3n+2}} \,\,\,\,]&  \,& \none[_{\cdots}] \,& \none[_{\cdots}] & \none[_{\cdots}]  & & \none[_{\cdots}] & \\
\none [^{_{4n+3}}\,\,\,\, ]&  \,& \none[_{\cdots}] \,& \none[_{\cdots}] & \none[_{\cdots}]  & \none[_{\cdots}]& \none[_{\cdots}] & & \none[_{\cdots}] &  \\
\none [^{_{5n+3}} \,\,\,\,]&   \,&  \none[_{\cdots}] & \none[_{\cdots}]  & \none[_{\cdots}]& \none[_{\cdots}] & \none[_{\cdots}]&\none[_{\cdots}] &\none[_{\cdots}]& &\none[_{\cdots}] & &*(red) &\none[\a_1] \\
\none [^{_{6n+4}} \,\,\,\,]&    \,& \none[_{\cdots}] & \none[_{\cdots}]  & \none[_{\cdots}]& \none[_{\cdots}] & \none[_{\cdots}]&\none[_{\cdots}] &\none[_{\cdots}]&\none[_{\cdots}] &\none[_{\cdots}]&\none[_{\cdots}]& &\none[_{\cdots}] & &*(red) &  \none[\a_2] \\
\none [^{_{7n+5}} \,\,\,\,]&    \,& \none[_{\cdots}] & \none[_{\cdots}]  & \none[_{\cdots}]& \none[_{\cdots}] & \none[_{\cdots}]&\none[_{\cdots}] &\none[_{\cdots}]&\none[_{\cdots}] &\none[_{\cdots}] &\none[_{\cdots}] &\none[_{\cdots}] &\none[_{\cdots}]&\none[_{\cdots}]& &\none[_{\cdots}] & &*(red) &\none[\a_3]\\ 
\end{ytableau}
$$ 
\vspace{-0.4cm}
\caption{$\D_{8,8n+6}$ and $\D_{8,8n+7}$ }\label{D867}
\end{figure}

Applying Rule \ref{R2}, we have that $A_1=\{ $ paths containing the box $\a_1\text{ and not }(\a_2 \text{ or } \a_3)\}$, $A_2 =\{ $ paths containing the box $\a_2 \text{ and not }\a_3\}$, and
$A_3 =\{ \text{paths containing the box } \a_3 \}$. 

Applying the rule \ref{R1} to these sets, we have:
\begin{enumerate}[\tb{Case} 1:]
\item
for $A_1$, we must find the rectangles $5 \times b_1$ and $3\times b'_1$
with underlying diagram $\l=(5n+4,4n+3,3n+2,2n+1,n)$ and $\l'=(n)$, then
\[|A_1|=|\D_{5,5n+4}||\D_{3,3n+2}|.\]

\item
for $A_2$, we must find the rectangles $6 \times b_2$ and $2\times b'_2$ with underlying diagram $\l=(5n+4,4n+3,3n+2,2n+1,n)$ and $\l'=(n)$, then
\[|A_2|=|\D_{6,6n+5}||\D_{2,2n+1}|.\]

\item
for $A_3$, we must find the rectangle $7 \times b_3$ with underlying diagram $\l=(6n+5,5n+4,4n+3,3n+2,2n+1,n)$, then,
\[|A_3|=|\D_{7,7n+6}|.\]
\end{enumerate}
Finally, we obtain:
\begin{align*}
|\D_{8,8n+6}| = \Ct{(8,8n+7)}&-\Ct{(7,7n+6)}\\
& -\Ct{(6,6n+5)}\Ct{(2,2n+1)} - \Ct{(5,5n+4)}\Ct{(3,3n+2)}.
\end{align*}

\subsection{Example $\D_{6,6n+2}$} \label{6n+2}

Similarly to the previous example, comparing $\D_{6,6n + 2} $ and $\D_{6,6n + 1} $ from Equation \ref{Qab}, we get that the total difference is $Q_{6,6n + 2}-Q_{6,6n + 1} =  3$. From Equation \ref{eDeltaI}, in this cases $\De(l)$ is zero except for $l = 3,4,5$ (see Figure \ref{D612}).

\begin{figure}[h]
$$
\ytableausetup{mathmode, boxsize=1.2em}
\begin{ytableau}
\none [^{_{n}}\,\,\,\, ]&  \, &\none[_{\cdots}]&\\
\none [^{_{2n}}\,\,\,\, ]&  \,& \none[_{\cdots}] && \none[_{\cdots}] &\\
\none [^{_{3n}} \,\,\,\,]&  \,& \none[_{\cdots}] \,& \none[_{\cdots}] & \none[_{\cdots}]  & & \none[_{\cdots}] & &*(red)&\none[\a_1]\\
\none [^{_{4n}}\,\,\,\, ]&  \,& \none[_{\cdots}] \,& \none[_{\cdots}] & \none[_{\cdots}]  & \none[_{\cdots}]  & \none[_{\cdots}]& \none[_{\cdots}] & & \none[_{\cdots}] & &*(red)&\none[\a_2] \\
\none [^{_{5n}} \,\,\,\,]&   \,& \none[_{\cdots}] \,& \none[_{\cdots}] & \none[_{\cdots}]  & \none[_{\cdots}]&  \none[_{\cdots}]  &\none[_{\cdots}] & \none[_{\cdots}]&\none[_{\cdots}] &\none[_{\cdots}]& &\none[_{\cdots}] & &*(red)&\none[\a_3] \\
\end{ytableau}
$$ 
\vspace{-0.4cm}
\caption{$\D_{6,6n+1}$ et $\D_{6,6n+2}$}\label{D612}
\end{figure}
\vspace{-0.4cm}
We consider the sets:
\begin{align*}
B_1&=\{ \text{paths containing the box } \a_1 \},&\\
B_2&=\{ \text{paths containing the box } \a_2 \text{ and not } \a_1\},&\\
B_3&=\{ \text{paths containing the box } \a_3 \text{ and not }( \a_1 \text{ or } \a_2)\}.&
\end{align*}
We have the following cases : 
\begin{enumerate}[\tb{Case} 1:]
\item
For $B_1$, we find $3 \times b_1$ and $3\times b'_1$ with underlying diagram $\l=(2n,n)$ and $\l'=(2n,n)$, respectively (see eq.\ref{eSPE}). Then,
\[|B_1|=|\D_{3,3n+1}||\D_{3,3n+1}|.\]
\item
For $B_2$, the rectangles $4 \times b_2$ and $2\times b'_2$ such as $\l=(3n,2n,n)$ et $\l'=(n)$ (see eq.\ref{eSPE}).  Then,
\[|B_2|=|\D_{4,4n+1}||\D_{2,2n+1}|.\]
\item 
for $B_3$, the rectangle $5 \times b_3$ such as $\l=(4n,3n,2n,n)$ (see eq.\ref{eSPE}). Then,
\[|B_3|=|\D_{5,5n+1}|.\]
\end{enumerate}
Finally, we obtain:
\begin{align*}
|\D_{6,6n+2}| = \Ct{(6,6n+1)}&+\Ct{(5,5n+1)}  \\
&+\Ct{(4,4n+1)}\Ct{(2,2n+1)}  + \Ct{(3,3n+1)}\Ct{(3,3n+1)}.
\end{align*}
\subsection{Example $\D_{6,9}$.}
For $\D_{6,9}$ the Ferrers diagram is:
\vspace{-0.6cm}
\begin{figure}[h]
$$
\ytableausetup {mathmode, boxsize= 0.8em,centertableaux} 
\ydiagram[*(red)] {1+0,2+1,3+1,4+2,5+2}
* [*(yellow!70) ]{ 1 , 3 , 4 , 6 , 7 }
$$
\vspace{-0.4cm}
\caption{Ferrers diagram $\D_{6,9}$ and $\D_{6}$ }\label{D69}
\end{figure}

\vspace{-0.4cm}
and the diagrams decomposition method. After six iterations the decomposition is:
\begin{align*}
\ytableausetup {mathmode, boxsize= 0.5em,centertableaux} \ydiagram [
*(yellow!70) ]{ 1 , 3 , 4 , 6 , 7 }
&=  
\ydiagram [*(yellow!70) ]{ 1 , 2 , 3 , 4 , 5 } + \ydiagram [ *(yellow!70) ]{ 1 , 2 , 3 , 4 } + \ydiagram [
*(yellow!70) ]{ 1 , 2 , 3 , 4 } + \ydiagram [ *(yellow!70) ]{ 1 , 2 , 3 } \times \ydiagram [
*(yellow!70) ]{ 1 } + \ydiagram [ *(yellow!70) ]{ 1 , 2 , 3 } + \ydiagram [
*(yellow!70) ]{ 1 } \times \ydiagram [ *(yellow!70) ]{ 1 , 2 , 3 } +&\\
&+ \ydiagram [
*(yellow!70) ]{ 1 , 2 } \times\ydiagram [ *(yellow!70) ]{ 1 , 2 } +\ydiagram [ *(yellow!70) ]{ 1 ,
2 } \times \ydiagram [ *(yellow!70) ]{ 1 } + \ydiagram [ *(yellow!70) ]{ 1 , 2 } \times  \ydiagram [*(yellow!70) ]{ 1 } + \ydiagram [ *(yellow!70) ]{ 1 } \times \ydiagram [ *(yellow!70) ]{ 1 ,2 , 3 } + \ydiagram [ *(yellow!70) ]{ 1 } \times  \ydiagram [ *(yellow!70) ]{ 1 , 2 } +\ydiagram [
*(yellow!70) ]{ 1 } \times  \ydiagram [ *(yellow!70) ]{ 1 } \ydiagram [ *(yellow!70) ]{ 1 }
\end{align*}
then,
\begin{align*}
\Hom{\ytableausetup {mathmode, boxsize= 0.3em,centertableaux} \ydiagram [
*(yellow!70) ]{ 1 , 3 , 4 , 6 , 7 }}
& = \Ct{2}^3 + 3\Ct{2}\Ct{3} + \Ct{3}^2 + 3\Ct{2}\Ct{4} + \Ct{4} + 2\Ct{5} + \Ct{6} &\\
&=377 .&
\end{align*}

We can also decompose (see Figure \ref{D6968}), and after four iterations the decomposition is:

\begin{figure}[h]
$$
\ytableausetup {mathmode, boxsize= 0.8em,centertableaux} 
\ydiagram[*(red)] {1+0,2+1,4+0,5+1,6+1}
* [*(yellow!70) ]{ 1 , 3 , 4 , 6 , 7 }
$$
\vspace{-0.4cm}
\caption{Ferrers diagram $\D_{6,9}$ and $\D_{6,8}$ }\label{D6968}
\end{figure}


\begin{align*}
\ytableausetup {mathmode, boxsize= 0.5em,centertableaux} \ydiagram [
*(yellow!70) ]{ 1 , 3 , 4 , 6 , 7 }
&=  
\ydiagram [*(yellow!70) ]{ 1 , 2 , 4 , 5 , 6 } + \ydiagram [ *(yellow!70) ]{ 1 , 2 , 4 , 5 } +  \ydiagram [*(yellow!70) ]{ 1 } \times \ydiagram [ *(yellow!70) ]{ 1 , 2 , 3 }+ \ydiagram [*(yellow!70) ]{ 1 } \times \ydiagram [ *(yellow!70) ]{ 1 , 2  } + \ydiagram [
*(yellow!70) ]{ 1  } \times\ydiagram [ *(yellow!70) ]{ 1 , 3,4 } 
\end{align*}
then,
\begin{align*}
\Hom{\ytableausetup {mathmode, boxsize= 0.3em,centertableaux} \ydiagram [
*(yellow!70) ]{ 1 , 3 , 4 , 6 , 7 }}
& =  \Ct{2}|\D_{4,6}| +\Ct{2}\Ct{3}+\Ct{2}\Ct{4}+  \Ct{5,7} +|\D_{6,8}| &\\
&=2\cdot23+2\cdot5+2\cdot14+66+227 =377.&
\end{align*}
As they are many possible decomposition, finding the shortest one is an open problem.

\subsection*{Final remarks and Acknowledgements} 
The author would like to thank his advisor François Bergeron and Sre\v{c}ko Brlek for their advice and support during the preparation of this paper.
The results presented here are part of J.E. Bla\v{z}ek's Master thesis. Algorithms in SAGE are available at \url{http://thales.math.uqam.ca/~jeblazek/Sage_Combinatory.html}.

\bibliographystyle{splncs03.bst}
\bibliography{Bibliog}

\end{document}